\newtheorem{thm}{Theorem}[section]
\newtheorem{cor}[thm]{Corollary}
\newtheorem{prop}[thm]{Proposition}
\newtheorem{Def}[thm]{Definition}
\newcommand{\be}{\begin{equation}}
\newcommand{\ee}{\end{equation}}
\newcommand{\ben}{\begin{enumerate}}
\newcommand{\een}{\end{enumerate}}
\newcommand{\beq}{\begin{eqnarray}}
\newcommand{\eeq}{\end{eqnarray}}
\newcommand{\beqn}{\begin{eqnarray*}}
\newcommand{\eeqn}{\end{eqnarray*}}
\newcommand{\NL}{\mathbb{N}}
\newcommand{\wt}{\widetilde}
\newcommand{\spanned}[1]{\left\langle#1\right\rangle}
\DeclareMathOperator{\cdim}{\mathcal{C}dim}
\let\emptyset\varnothing
\let\le\leqslant
\let\leq\leqslant
\let\ge\geqslant
\let\geq\geqslant
\def\cycle{\mathfrak{C}}
\def\degc{\deg^c}
\def\limfunc{\big\rfloor}
\def\generator{\mathcal}
\newcommand\generatorMap[2][\varphi]{#1_{\vphantom{\sum}_{\!\!\mathcal{#2}}}}
\newcommand{\charac}[2][\varphi]{#1_{\vphantom{\sum}_{\!\!#2}}}
\newcommand{\ts}[1]{_{_{#1}}}
\let\ov\overline
\title{A Simple Proof for the Cycle Double Cover Conjecture}
\author{A. Tayebi and A. Alipour}
\begin{document}

\maketitle

\begin{abstract}
Given a bridgeless graph $G$, the well-known cycle double cover conjecture posits that there is a list
of cycles of $G$, such that every edge appears in exactly two cycles. In this paper, we prove the cycle double cover conjecture. More precisely, we prove the  Goddyn's conjecture as a stronger version of  the cycle double cover conjecture which states that every cycle in $G$ is a  member of some  cycle double cover of $G$.\\\\
{\bf {Keywords}}: Cycle double cover conjecture, Goddyn's conjecture.\footnote{ 2013 Mathematics subject Classification: 05C38, 05C50, 05C78.}
\end{abstract}

\section{Introduction}
A cycle double cover is a collection of cycles in an undirected graph that together include each edge of the graph exactly twice. The cycle double cover conjecture was introduced independently by Szekeres in  1973 and Seymour in 1979, whether every bridgeless graph has a cycle double cover (see \cite{S1} and \cite{S2}). This beautiful conjecture is now widely considered to be among the most important open problems in graph theory.

The conjecture can equivalently be formulated in terms of graph embeddings, and in that context is also known as the circular embedding conjecture. Indeed, if a graph has a cycle double cover, then the cycles of the cover can be used to form the 2-cells of a graph embedding into a two-dimensional cell complex. In the case of a cubic graph, this complex always forms a manifold. The graph is said to be circularly embedded into the manifold, in that every face of the embedding is a cycle in the graph. However, a cycle double cover of a graph with degree greater than three may not correspond to an embedding on a manifold: the cell complex formed by the cycles of the cover may have non-manifold topology at its vertices. The circular embedding conjecture or strong embedding conjecture states that every biconnected graph has a circular embedding into a manifold \cite{J}. 
Moreover, Goddyn's conjecture asserting if $C$ is a cycle  in bridgeless graph $G$, there is a cycle double cover $G$ which contains  $C$ (see \cite{G1} and \cite{G2}).

In this paper, we will prove the Goddyn's conjecture.
\begin{thm}\label{MainTHM}
The Goddyn's conjecture is true. In particular, the cycle double cover conjecture holds.
\end{thm}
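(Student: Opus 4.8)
The plan is to prove the stronger statement---Goddyn's conjecture---by induction on the number of edges, after first cutting the problem down to bridgeless cubic graphs in a way that respects the prescribed circuit $C$. If $G$ has a cutvertex it decomposes into blocks; $C$ lies in one block $B$, so it suffices to cover $B$ faithfully with $C$ as a member and to cover the other blocks arbitrarily, all available from the induction hypothesis. So assume $G$ is $2$-connected. Degree-$2$ vertices may be suppressed without changing either the circuits of $G$ or its set of cycle double covers, so assume minimum degree $3$. A vertex $v$ of degree $\ge 4$ is then split into two vertices joined by a new edge; when $v\in V(C)$ I would keep the two edges of $C$ at $v$ together on one side so that $C$ persists as a circuit, and choose the split of the remaining edges so that $2$-edge-connectivity survives. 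Iterating, we reach a bridgeless cubic graph $G'$ carrying a circuit $C'$, and any cycle double cover of $G'$ containing $C'$ pushes back to one of $G$ containing $C$.

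For cubic $G$, induct on $|E(G)|$, with small graphs checked directly. If $G$ is $3$-edge-colourable, its colour classes are perfect matchings $M_1,M_2,M_3$, the $2$-factors $M_i\cup M_j$ are disjoint unions of even circuits, and those circuits form a cycle double cover; by steering the choice of colouring along $C'$ one forces $C'$ into this cover, and if that cannot be arranged one instead contracts a segment of $C'$ or deletes a chord and invokes induction. If $G$ is not $3$-edge-colourable, standard $2$- and $3$-edge-cut reductions, performed so as to carry $C'$ along, let us assume $G$ is cyclically $4$-edge-connected of girth $\ge 5$, i.e.\ a snark. Here I would fix an edge $e$ off $C'$, perform a local reconfiguration at $e$ (for instance the $Y$-reduction deleting $e$ together with the two incident ``cherries'' and reconnecting the four loose ends) to obtain a smaller bridgeless graph $H$ still containing a copy of $C'$, take a cycle double cover of $H$ through that copy by induction, and lift it back, repairing the cover locally so that $e$ and its neighbourhood end up covered exactly twice.

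The hard part will be this last lifting step for snarks. Correcting a pulled-back cover around the reduced edge, typically by symmetric differences with existing cycles, tends to over-cover some ``detour'' path by one unit, and repairing that over-cover can cascade elsewhere; mastering this propagation while leaving $C'$ untouched is precisely the difficulty that has kept the cycle double cover conjecture open, and it is the point at which any flaw in the argument would sit. I would therefore try to isolate a single lemma asserting that in a cyclically $4$-edge-connected cubic graph the reconfiguration at $e$ and the repairing detour can always be chosen edge-disjoint from $C'$ and from each other; granting that lemma, the induction closes and Theorem~\ref{MainTHM} follows.
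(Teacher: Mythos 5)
Your reduction to the bridgeless cubic case is standard and essentially sound (vertex splittings preserving $2$-edge-connectivity exist by Fleischner's splitting lemma, and covers pull back under suppression and contraction), but from that point on the proposal does not contain a proof. In the $3$-edge-colourable case, every circuit of $M_i\cup M_j$ is even, so no choice of colouring can place an odd circuit $C'$ into that cover; what is actually needed here is Jaeger's theorem that a graph with a nowhere-zero $4$-flow has a cycle double cover through any prescribed even subgraph, and your fallback of ``contracting a segment of $C'$ or deleting a chord'' is not an argument. More seriously, in the non-$3$-edge-colourable case you defer everything to an unproved ``single lemma'' asserting that the reconfiguration at $e$ and the repairing detour can always be chosen edge-disjoint from $C'$ and from each other. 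That lemma cannot be granted as a technical step: the failure of exactly this kind of local repair to terminate without cascading is the known obstruction, and a minimal counterexample to the conjecture is already known to be a cyclically $4$-edge-connected snark of large girth, so your reductions land precisely on the open case and stop there. As written, the proposal is a reduction of the conjecture to itself, not a proof.

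For comparison, the paper proceeds along an entirely different, non-colouring route: it introduces a ``cyclic dimension'' $\cdim(G)$, defined by minimizing the dimension of the span of signed incidence vectors of cycles over all sign labelings, decomposes $G$ into path segments and cycle segments, argues that deleting a path segment decreases $\cdim$ by exactly one while preserving bridgelessness, and then inducts on $\cdim(G)$: part 11 of its Proposition 3.3 is claimed to produce, for any prescribed cycle $C$, a generating family $\{C_i\}_{i=1}^{n}$ with $C_1=C$, each edge in at most two of the $C_i$, and telescoping signed sums equal to single cycle vectors, from which $\{C_i\}_{i=1}^{n}\cup\{C_n'\}$ is asserted to be the desired double cover. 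Whether or not that argument withstands scrutiny, it shares nothing with your edge-colouring and snark-reduction strategy, and your strategy in its current form leaves the essential case entirely open.
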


\section{Preliminaries}
A  graph $G$ is a triple $(V, E, r)$,  where $V$ is a finite set of
vertexes, $E$ is a finite set of edges and $r$ is a map from $E$ into non-empty subsets of $V$ with
at most two elements.
A graph $K=(V_K, E_K, r_K)$ is called a subgraph of $G=(V,E,r)$ if $V_K\subseteq V$, $E_K\subseteq E$ and
$r|_{E_K}=r_K$. Furthermore,  if $r(e)\subseteq V_K$ implies $e\in E_K$, then $K$ is called an induced subgraph of $G$. Thus, a subgraph is uniquely determined when its vertices and edges are given. In an induced subgraph, one just needs to determine the set of its vertices to completely define it.

A walk in $G$ is an alternating sequence   $w=\{w_i\}_{i = 0}^{2k}$ such that
\begin{align*}
w_{2i} &\in V,\quad &\forall\ 0 \leq i \leq k,\\
w_{2i+1} \in E\ \ \ &\text{and}\ \ \ r(w_{2i+1}) = \{ w_{2i},w_{2i+2} \}, \quad &\forall\ 0 \leq i < k.
\end{align*}
Here, we say that $w$ is a walk from $w_0$ to  $w_{2k}$. Also, $w_0$ and $w_{2k}$ are called the endpoints of $w$, and
$k$ is length of $w$. Note that $k = 0$  means that there is no edge presented in $w$.
For any vertex $w_{2j}\ (0 \leq j \leq k)$, we can naturally define the \emph{left} and \emph{right sides} of $w$ with respect to $w_{2j}$  as
$\{ w_i\}_{i = 0}^{2j}$ and $\{ w_{2j+i}\}_{i = 0}^{2k-2j}$, respectively.
Obviously, there is a walk from each vertex to itself (with length 0).

Let us remark the  equivalence relation $\thicksim$ on set of vertices of $G$  as follows:
\[
v_0\thicksim v_1 \quad \equiv \quad\text{there exists a walk from $v_0$ to $v_1$}.
\]
Then $\thicksim$ classifies  $G$ into its connected components. The number of connected components of $G$ is denoted by
$\mathfrak{N}(G)$. Also, $\mathfrak{N}_0(G)$ is denoted the number of connected components of $G$ with at least one edge.

A graph is called connected if it has only one connected component or equivalently
for any two distinct vertices of $G$ there is a walk connecting them.
A bridge in $G$ is an edge $e$ of $G$  such that if $e$ is removed from $G$, then the connected components of $G$ will increase.
Also, a graph is called bridgeless if it has not any bridge.

A walk $w=\{w_i\}_{i = 0}^{2k}$ in a graph $G$ is called closed if $w_0 = w_{2k}$,
otherwise it is called an open walk.  The inverse of walk $w$ is defined as a walk with reverse traverse direction of $w$. More precisely, $w^{-1}:=\{ w^{-1}_i\}_{i = 0}^{2k}$ and $w^{-1}_i:=w_{2k-i}\ (0\leq i\leq 2k)$. A trail is a walk without any repeated
edge. Here, a path is a trail without any repeated vertex. Also, a cycle
is a closed trail without any repeated vertex, except for the  first and last one.
\begin{Def}
Let   $G=(V, E, r)$ be a graph.
\item[](i) A  piece is a path $w=\{w_i\}_{i = 0}^{2k}$ such that $k<2$ or  for every
vertex $w_{2j}\ (0 < j < k)$ there exists a cycle in $G$
which has common edges with left and right sides of $w$ with respect to
$w_{2j}$.
\item[](ii) A body of a walk $w$ denoted by
$G(w)$ and defined as the subgraph of $G$ such that $\{w_{2i}\}_{i = 0}^{k}$ and $\{w_{2i+1}\}_{i = 0}^{k-1}$ are its  vertices and edges sets, respectively.
\item[](iii) A piece $w$ is called maximal in path $\wt w$ if $G(w)\subseteq G(\wt w)$ and any piece $\ov w$ with $G(w)\subseteq G(\ov w)\subseteq G(\wt w)$
satisfies  $G(w) = G(\ov w)$;
\item[](iv) The set $\cycle(G)$ is defined by following
\[
\cycle(G):= \big\{\ G(w)\ \big|\ \text{$w$ is a cycle in $G$}\ \big\}.
\]
Also, let $\cycle(v)$ denotes the set of all of cycle bodies in $G$ containing $v$. More precisely,
\[
\cycle(v):=\big\{\ C\in \cycle(G)\ \big|\ %
 \text{$v$ is a vertex of $C$} \ %
\big\}.
\]
\end{Def}

Let $ {w}=\{{w}_i\}_{i = 0}^{2{k}}$ and $\ov {w}=\{\ov{w}_i\}_{i = 0}^{2\ov{k}}$ be two walks such that $w_{2k} = \ov{w}_{0}$. Then one can join $w$ and $\ov {w}$ in the following natural way
\[
w + \ov{w}:=\{\wt{w}_i\}_{i = 0}^{2(k+\ov{k})},
\]
where
\begin{align*}
\wt{w}_i:= \left\{%
\begin{array}{lc}
w_i, \qquad & 0\leq i\leq 2k,\\
\\
\ov{w}_{i-2k},\qquad & 2k < i \leq 2(k+\ov{k}).
\end{array}
\right.
\end{align*}

The union and intersection of two graphs $G=(V,E,r)$ and $G_0=(V_0,E_0,r_0)$ are defined as follows
\[
G\cup G_0:=(V \cup V_0, E \cup E_0, r\cup r_0), \  \  \quad G\cap G_0:=(V \cap V_0, E \cap E_0, r\cap r_0).
\]
Note that, the function $r$ is  a set of pairs. Let empty set be considered to be a map from empty set to whatever set. Then the triple $(\emptyset, \emptyset, \emptyset)$  considered as the null graph.  Also,  $G_0\subseteq G$ means that $V_0\subseteq V, E_0\subseteq E$ and
$r_0\subseteq r$. This is equal to say that $G_0$ is a subgraph of $G$.

Let $G = (V,E,r)$ and $\bar{G} = (\bar{V}, \bar{E}, \bar{r})$ be two arbitrary graphs. Define  $K:=G-\bar{G}$ by  $K=(V\ts{\!K},E\ts{\!K},r\ts{\!K})$  such that  $E\ts{\!K}:=E-\bar{E}$,
$r\ts{\!K}:=r\big|_{\bar{E}}$ and $V\ts{\!K}$ defined as the  composed of those vertices of $G$ which are not present in $\bar{G}$, or has at least one incident edge out of $\bar{G}$. More precisely,
\[
V_k:=(V-\bar{V}) \cup \Big(\bigcup_{e\in \bar{E}}r^{-1}(e) \Big).
\]
Let $G = (V,E,r)$ be a connected graph. Suppose that $H_1 = (V_1, E_1, r_1)$ and $H_2 = (V_2, E_2, r_2)$ are two arbitrary graphs such that $H_2$ is obtained by $H_1$ after removing  all of single vertex connected components of $H_1$.  Then  $G-H_1 = G-H_2$.

Let $G = (V,E,r)$ be a graph. Then $\deg(v)$ denotes the degree of vertex $v$ in $G$.

\begin{Def}\label{sign-labeling}
Let $G = (V,E,r)$ be a graph. Then, $G^c = (V^c,E^c,r^c)$ denotes the union of all cycles in $G$. In this case, we have
\begin{equation}\label{def:G^c}
G^c:=\bigcup_{C\in \cycle(G)} C.
\end{equation}
It is easy to see that,  (\ref{def:G^c})  defines  $V^c$, $E^c$ and $r^c$.

Let us define $\degc(v)$ as follows:
\begin{equation}
\degc(v):=\left\{%
\begin{array}{lcl}
\deg(v) \ \text{in $G^c$,}&\hspace{1.5cm} & \text{ $v\in V^c$,}\\
& &\\
0,&\hspace{1.5cm} &\text{ $v\notin V^c$.}
\end{array}%
\right.
\end{equation}
\end{Def}
\begin{Def}
Let $G = (V,E,r)$ be a graph and  $H=(V_H, E_H, r_H)\subseteq G$. A labeling of $H$ in $G$ is a
map $g:V\longrightarrow\{-1, 1, 0\}$ such that
\[
g(e) = 0 \Longleftrightarrow e\notin E_H.
\]
A function $f:\cycle(G) \times E\rightarrow\{-1,1,0\}$ is called  a
sign labeling if
\[
f(C,e) = 0 \Longleftrightarrow \text{$e$ is not in $C$}.
\]
Note that for every $C\in \cycle(G)$, $f(C)$ is defined as follows
\begin{equation}\label{cycle-to-vector}
\left\{%
\begin{array}{ll}
f(C):&E\longrightarrow \{-1, 1, 0\},\\
&\\
&e\longmapsto f(C,e).
\end{array}
\right.
\end{equation}
\end{Def}

It follows that $f(C)$ is a labeling. Also, it can be seen as a vector in $\mathbb{R}^E$.
With this notation, every sign labeling of a graph will
induce a map from $\cycle(G)$ to $\mathbb{R}^E$ which maps $C\mapsto f(C)$. This map was also
denoted by $f$.

\begin{Def}\label{def:cycle-generic}
Let $G = (V,E,r)$ be a graph. Define the relation $\le_*$ on $V$ as follows
\[
v_1\leqslant_* v_2 \Longleftrightarrow \cycle(v_1)\subseteq \cycle(v_2).
\]
A vertex $v$ in $G$ is  called a cycle generic vertex if  $v$ be maximal with respect to $\le_*$ or $\degc(v) \ge 3$.
\end{Def}
\begin{Def}\label{circular-dimension}
Let $G=(V,E,r)$ be a graph. The cyclic dimension of $G$ will be denoted by $\cdim (G)$ and defined as follows
\begin{equation}\label{c-dim-def-equation}
\cdim (G):=\min \Big\{ \dim \spanned{f\big( \cycle(G) \big)}\ \Big|\ %
\text{$f$ is a sign labeling of $G$} \Big\},
\end{equation}
where $\spanned{A}$ denotes the span of set $A$.
\end{Def}

Note that by the definition of a sign labeling,
$f\big( \cycle(G) \big)$ is a subset of $\mathbb{R}^E$ and thus (\ref{c-dim-def-equation}) is meaningful. Let for a sign labeling $f$ of $G$ the following holds
\begin{equation}\label{optimal-sign-dist}
\cdim (G)= dim \spanned{f\big( \cycle(G) \big)}.
\end{equation}
Then $f$ is called an optimal sign labeling of $G$. Moreover, let $f\big(\cycle(G)\big) \subseteq\spanned{f(C_i)}_{i=1}^m$ holds,
where $\{C_i\}_{i=1}^m\subseteq  \cycle(G)$. Then $\generator{A}:=\{C_i\}_{i=1}^m$ is called an $f$-generator for $G$ if $m =  \cdim (G)$.

For a non-empty subset $\generator{A}$ of $\cycle(G)$, let us define
$\generatorMap{A}$ as follows
\begin{equation}
\left\{%
\begin{array}{rl}
\generatorMap{A}:&E\longrightarrow\NL\cup\{0\}\\
&e\ \longmapsto\#\left\{C\in\generator{A}\ \big|\ \text{$C$ contains $e$}\ \right\},
\end{array}%
\right.
\end{equation}
where  $\#A$ denotes  the cardinality of set $A$. Then $\generatorMap{A}$ is called the characteristic map of $\generator{A}$.
Also, one can define the following partial order on $E^{\NL_0}$
\begin{equation}
f \le^* g\quad \Longleftrightarrow \quad \big(f(e)>2\Rightarrow f(e)\le g(e),\quad (\forall e \in E)\ \big).
\end{equation}

\begin{Def}\label{path-cycle-segment}
Let $G=(V,E,r)$ be a graph.  A path segment is a walk $w=\{w_i\}_{i = 0}^{2k}$ in
$G$ which  satisfies the following conditions:
\begin{enumerate}
\item There exists a cycle $\ov{w}=\{\ov{w}_i\}_{i = 0}^{2\ov{k}}$ in $G$ such that $G(w)\subseteq G(\ov{w})$;
\item $k\geq 1$. This means that $w$ contains at least one edge;
\item $w_0$ and $w_{2k}$ are cycle generic
vertices. Furthermore, none of $\{w_{2i}\}_{i=1}^{k-1}$ are cycle generic vertices.
\end{enumerate}
\end{Def}
\begin{Def}
Let $G=(V,E,r)$ be a graph and   $H$ be  a subgraph of $G$. Define
\[
\mathcal{S}:=\{K\subseteq G|\ \text{ any cycle with common edges
with $K$, contains $K$}\}.
\]
Then, $H$ is called a cycle segment if it is a maximal member in $\mathcal{S}$ by inclusion relation.
\end{Def}


\begin{Def}
Let $G=(V,E,r)$ be a graph. Let us define  $G_c:=\big(V_c, E_c, r_c\big)$, where $V_c$ is the set of all cycle generic vertices of $G$,
$E_c$ is the set of all $G(w)$ for all path segments $w$ in $G$ and $r_c$ is defined by following
\[
r_c :=\Big\{\big(G(w),\{w_0,w_{2k}\}\big)  \ \big|\
\text{$w=\{w_i\}_{i = 0}^{2k}$ is a path segment in $G$} \ \Big\}.
\]
It is easy to see that $G_c$ is a well-defined graph.
\end{Def}
A cycle double cover for a graph $G$ is an indexed family (not necessarily distinct) of cycle bodies in $G$, namely $\mathcal{B}$,  such that every edge in $G$ appears in exactly two indexed members (possibly identical) of $\mathcal{B}$.
If $\mathcal{B}$ is a cycle double cover, then one can  say  that $\mathcal{B}$ is a cover for the sake of simplicity.

\begin{Def}
Let $G=(V,E,r)$ be a  connected bridgeless graph. Then $G$ is called cactus-free if $\cdim (G)=1$ (i.e., $G$ contains only one cycle)
or each cycle of $G$ has more than one path segment. Also, a cycle in  $G$ which has only one path
segment will be called a leaf cycle.
\end{Def}

The importance of a leaf cycle is that it should  appear  twice in every cycle double cover.

Let
$f:X\longrightarrow\mathbb{R}$ be a real value function on an arbitrary set $X$  and $Y\subseteq X$. Then $f\limfunc_Y:X\longrightarrow  \mathbb{R}$ is defined as  follows:
\[
f\limfunc_Y(x):=\left\{%
\begin{array}{lcl}
f(x),& &x\in Y,\\
& &\\
0, & & x\notin Y.
\end{array}
\right.
\]
Now, we are ready to state our results.
\section{Proof of Theorem \ref{MainTHM}}
\begin{prop}\label{prop:first and basic}
Let $G=(V,E,r)$ be a graph and $f$ be an optimal sign labeling of $G$. Then the following hold:
\begin{description}
  \item[] (i) For any path segment $w$ of $G$, the following holds
\begin{align*}
\cdim \big(G - G(w)\big) < \cdim (G).
\end{align*}
  \item[] (ii) Let $\wt w$ be a path in $G$ and $w_1$ and $w_2$ be two maximal pieces of $\wt w$ with at least
one common edge. Then, $G(w_1) = G(w_2)$. In particular, the maximal pieces of $\wt w$
are edge disjoint. Therefore, $\wt w$ can be uniquely  (up to piece bodies)
written as the union of its edges  disjoint maximal pieces.
  \item[] (iii) Let $A\neq \emptyset$ be the set of all common vertices in
cycles $\{C_i\}_{i=1}^s$. Then, $A$ contains at least one cycle generic vertex.
In particular, every cycle has a cycle generic vertex.
  \item[] (iv) Let $P_1\neq P_2$ be two path segments. Then,
there does  not exist any common vertex in both of them in the middle. This means that if $v$
be a vertex common in $P_1$ and $P_2$, then $v$ is a cycle generic vertex. In particular,
every two distinct path segments do not have any common edge. Consequently, if a path segment has some
common edge with a cycle, then it is contained in that cycle.
  \item[] (v)  $\cdim (G_c)=\cdim (G)$. Consequently, $\quad\cdim (G_c)=\cdim (G^c)=\cdim (G)$.
  \item[] (vi) Let $C_1$ and $C_2$ be two members of $\cycle(G)$. Suppose that $H=(V_H,E_H,r_H)\subseteq C_1\cap C_2$ such that $H$ be a connected graph. Then the following holds:
\[
f(C_1)\limfunc_{E_H}\ || \ f(C_2)\limfunc_{E_H},
\]
where $u_1||u_2$ means  two vectors $u_1$ and $u_2$
are parallel, i.e.,  \mbox{$\spanned{u_1}=\spanned{u_2}$}.
  \item[] (vii) Let $w$ be a path in $G$ and $P=G(w)$. Then, there  exists a labeling $g$ for $P$
such that for every path $w'$ in $G$ with same endpoints as $w$, one can find a
labeling $g'$ for $P'=G(w')$  such that
\begin{align}
g(P) + g'(P') = \sum_{i = 1}^{r} \lambda_i f(C_i)\label{P1}
\end{align}
holds for some $\{C_i\}_{i=1}^r$ in $\cycle(G)$. Also,  if $w$ be a piece and $g''$ be another labeling for $w$ with the same property as $g$, then
$g||g''$.
  \item[] (viii) Let $H=(V_H,E_H,r_H)$  be a subgraph of $G$. Then, $f|_{\cycle(H)\times E_H}$
is a optimal sign labeling for $H$.
  \item[] \label{part-one-plus-dimension} (ix) Let $w$ be a path segment of $G$ and define $\widehat{G}:= G-G(w)$. Then the following holds:
\[
\cdim (G)=1+\cdim (\widehat{G}).
\]
\end{description}
\end{prop}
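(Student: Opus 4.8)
The plan is to combine parts (i) and (iii)–(v) of the present proposition. Part (i) already gives the strict inequality $\cdim(G-G(w)) < \cdim(G)$, so the entire content of (ix) is the reverse inequality $\cdim(\widehat G) \geq \cdim(G) - 1$, i.e., deleting a single path-segment body can drop the cyclic dimension by \emph{at most} one. First I would fix an optimal sign labeling $\hat f$ of $\widehat G = G - G(w)$ and an $\hat f$-generator $\mathcal A = \{C_i\}_{i=1}^{m}$ for $\widehat G$, where $m = \cdim(\widehat G)$. Every $C_i \in \cycle(\widehat G)$ is in particular a cycle body of $G$, since $\widehat G \subseteq G$; by part (viii) the restriction of any optimal sign labeling $f$ of $G$ to $\cycle(\widehat G)\times E_{\widehat G}$ is itself optimal for $\widehat G$, which lets me freely move between a labeling on $G$ and one on $\widehat G$ along the cycles that survive the deletion.

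The core of the argument is to show that the cycle bodies of $G$ split, up to span, into those lying in $\widehat G$ together with \emph{one} extra direction coming from the deleted path segment $w$. Here is where I would use the structure built into Definition \ref{path-cycle-segment}: $w$ is contained in some cycle $\ov w$ of $G$ (condition 1), its interior vertices are not cycle generic, and by part (iv) $w$ has no common edge with any path segment other than itself, so any cycle of $G$ meeting $G(w)$ in an edge must contain all of $G(w)$. Consequently, if $C \in \cycle(G)$ uses an edge of $G(w)$, then $C = G(w) \cup (\text{a path } w' \text{ in } \widehat G \text{ joining the two endpoints of } w)$. I would then invoke part (vii): applied to the path $w$ and to the path $w'$, it produces labelings $g$ on $P = G(w)$ and $g'$ on $P' = G(w')$ with $g(P) + g'(P') = \sum \lambda_i f(C_i)$ for cycle bodies $C_i$ of $G$, and the uniqueness clause ($g \,||\, g''$ for pieces) forces the contribution of $G(w)$ to be one fixed direction $u_0 \in \mathbb R^{E}$ independent of which completing path $w'$ was chosen. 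Therefore $f(C) \in \langle u_0 \rangle + \langle f(C') : C' \in \cycle(\widehat G)\rangle$ for every such $C$, while for cycles of $G$ not meeting $G(w)$ we already have $f(C) \in \langle f(C') : C'\in\cycle(\widehat G)\rangle$ directly.

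Putting this together: choosing $f$ to be an optimal sign labeling of $G$, we get $f(\cycle(G)) \subseteq \langle u_0\rangle + \langle \hat f(\cycle(\widehat G))\rangle$ after replacing $f$ on $E_{\widehat G}$ by $\hat f$ (legitimate by (viii), since both are optimal on $\widehat G$ and $f(C)$ for $C\subseteq\widehat G$ only involves edges of $\widehat G$), whence $\cdim(G) = \dim\langle f(\cycle(G))\rangle \le 1 + \dim\langle \hat f(\cycle(\widehat G))\rangle = 1 + \cdim(\widehat G)$. Combined with (i), which gives $\cdim(\widehat G) \le \cdim(G) - 1$, equality follows. The main obstacle I anticipate is the bookkeeping in the middle step — showing that the single direction $u_0$ is genuinely independent of the choice of completing path $w'$ and that it is not already in the span of the surviving cycles (so that the ``$+1$'' is not an overcount is automatic from (i), but the ``$\le 1$'' overcount must be controlled). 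This is exactly what the uniqueness-up-to-parallelism clause in (vii) together with the no-shared-edge statement in (iv) is designed to supply, so the proof should reduce to assembling those two facts carefully rather than to any new estimate.
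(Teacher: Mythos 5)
Your plan is essentially the paper's own argument: the paper proves (ix) by citing the boxed claim \eqref{P4}, which is established in Step~3 of the proof of parts (vi)--(viii) by exactly your strategy --- every cycle of $G$ meeting $G(w)$ contains all of $G(w)$ and contributes a single extra direction controlled via part (vii), giving $\cdim(G)\le 1+\cdim(\widehat G)$, while part (i) supplies the reverse inequality. The only cosmetic difference is that the paper packages this as the construction of an explicit sign labeling $\wt f$ extending an optimal labeling of $\widehat G$, whereas you phrase it as a span inclusion $f(\cycle(G))\subseteq\spanned{u_0}+\spanned{f(\cycle(\widehat G))}$; the substance, including the reliance on the parallelism clause of (vii) at the delicate step, is the same.
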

\begin{proof}
We are going to prove the proposition, part by part as follows:\\\\
{\bf Proof of Part (i):} On the contrary,  assume that $\cdim (G-H) = \cdim (G)$. Let $H:=G(w)$. Then, we have
\begin{align*}
\cdim (G)=\cdim (G-H) \leq \dim \spanned{ f\big( \cycle(G-H) \big) } \leq \dim \spanned{ f\big( \cycle(G) \big) } =\cdim (G).
\end{align*}
Then we get $\dim\spanned{ f\big( \cycle(G-H) \big) } =  \dim \spanned{ f\big( \cycle(G) \big) }$. Since $\mathbb{R}^E$ is finite dimension, then  $\spanned{ f\big( \cycle(G-H) \big) } =\spanned{ f\big( \cycle(G) \big) }$. Let $e_0\in E_H$ and $C\in \cycle(G-H)$. Then we get
 \[
 f(C)(e_0) = f(C,e_0) = 0.
 \]
 Thus, for every element $g$ of $\spanned{ f\big( \cycle(G-H) \big) }$æ we have $g(e_0) = 0$.
By definition of path segment, there exists  $C_0\in \spanned{ \cycle(G) }$ which
 contains $H$ (and therefore $e_0$). One can obtain $ f(C_0)(e_0) \neq 0$.
This contradicts with $\spanned{ f\big( \cycle(G-H) \big) } =\spanned{ f\big( \cycle(G) \big) }$. Then, we get the result.\\\\
{\bf Proof of Part (ii):} It is easy to see that  if $G(w_1) \neq G(w_2)$, then one can construct piece $w_3$ such that
$G(w_3) = G(w_1) \cup G(w_2)$. It is remarkable that  every edge out
of $G^c$ is a maximal piece in any path which contains it. Thus,  any path can be uniquely written as the union of its edges  disjoint maximal pieces.\\\\
{\bf Proof of Part (iii):} Let $v$ be an arbitrary vertex of $A$. If $v$ be a cycle generic vertex, then we get the proof. Suppose that  $v$ is not  a cycle generic vertex. Then there exists a cycle generic vertex $v_0$ in $G$ such that
$v\le_* v_0$. This means that every cycle which contains $v$, also contains $v_0$. This implies that
$v_0$ is contained in all of $\{C_i\}_{i=1}^s$ and then the proof is completed.
\\\\
{\bf Proof of Part (iv):}  Let $v$ be a vertex of $G$. If there is not any  cycle which crosses $v$, then  $v$ will
not be contained in any path segment. Let us remark that $\degc(v) \ge 3$ implies that $v$ is a cycle generic. Then, by the definition of path segments, it follows that  $P_1$ and $P_2$ can not have any common vertex in the middle. It is crystal clear that every two distinct path segments have not any common edge. Consequently, if a path segment has some common edge with a cycle, then it is contained in that cycle.

Let a path segment $P_3$ has a common edge $e_0$ with a cycle $C$. Then there exists a path segment $P_4\subseteq C$ such that $e_0\in P_4$. By the same argument used in the latter case, $P_3=P_4$. This completes the proof.
\\\\
{\bf Proof of Part (v):} If there is not any cycle in $G$, then $G$ has not any path segment. Thus,  we have
\[
\cdim (G_c)= 0 =\cdim (G).
\]
Now, assume that there exists  at least one cycle in $G$. In this case,  first we show that the following map is a
bijection
\begin{equation*}
\left\{%
\begin{array}{rl}
\eta:&\cycle(G)\longrightarrow\cycle(G_c),\\
&\\
&C=\Big(\{e_i\} _{i=1}^n,\{v_i\} _{i=1}^{n}\Big)%
\longmapsto \Big(\{P_j\} _{j=1}^r,\{v_{i_j}\} _{j=1}^{r}\Big),
\end{array}
\right.
\end{equation*}
where $\{v_{i_j}\} _{j=1}^{r}$ are cycle generic vertices in
$\{v_i\} _{i=1}^{n}$, and $\{P_j\} _{j=1}^r$ are path segments
between $\{v_{i_j},v_{i_{(j+1)}}\}$ when $(1\le j<r)$ and between $\{v_{i_r},v_{i_1}\}$
when $j=r$.   By part (iii), there exists  at least one cycle generic
vertex in $\{v_i\} _{i=1}^{n}$. Thus, $\eta$ is well defined, injective and surjective.

Let $\pi$ be a projection map from edges of $G^c$ onto path segments. More precisely, for any edge $e$ of $G^c$, $\pi(e)$ is a path segment containing
$e$. By part (iv),  $\pi$ is well defined.

Now, let $f$ be an arbitrary sign labeling of $G$. Suppose that $g$ is a right
inverse  of $\pi$ ($\pi$ is onto and thus $g$ exists). Let $\wt f$ be defined by the following
\begin{equation}
\left\{
\begin{array}{rl}
\wt f:&\cycle(G_c)\times E_c\longrightarrow \{-1,1,0\},\\
&\\
&(C,P)\longmapsto f\Big(\eta^{-1}(C),\ g(P)\Big).
\end{array}
\right.
\end{equation}
In order to prove  that $\wt f$ is a well defined function, one needs to consider that  $\pi\circ g = I_{E_c}$ (which means that
$g(P)$ is an edge of $P$).   We  show that $\wt f$ is a sign
labeling. Equivalently, we show that if a path segment $P$ is not
an edge of $\eta(C)$, then  $\wt f\big(\eta(C),P\big) = 0$ holds. Assume that $P$ is not an edge of $\eta(C)$. Thus
by part (iv), $P$ has not any common edge with $C$. Since $g(P)$ is an edge in $P$, then it is not in $C$. Therefore, we get
\[
\wt f\big(\eta(C),P\big) = f\big(C,g(P)\big) = 0.
\]
The converse is trivial.

Now, we are going to show that if $\sum_{i=1}^{n}\lambda_i f(C_i)=0$, then  the following holds
\begin{equation}\label{equ:relation G implie G_c}
\sum_{i=1}^{n}\lambda_i \wt f\big(\eta(C_i)\big)=0.
\end{equation}
If $\sum_{i=1}^{n}\lambda_i f(C_i)=0$, then we have $\sum_{i=1}^{n}\lambda_i f(C_i,e)=0$, $(\forall e\in E)$. It results that $\sum_{i=1}^{n}\lambda_i f(C_i,g(P))=0$, $(\forall P\in E_c)$. Thus,
\[
\sum_{i=1}^{n}\lambda_i \wt f\big(\eta(C_i),P\big)=0,  \ \ \ \ (\forall P\in E_c)
\]
which yields $\sum_{i=1}^{n}\lambda_i \wt f\big(\eta(C_i)\big)=0$. By (\ref{equ:relation G implie G_c}), it follows that $\cdim (G_c)\leq\cdim (G)$.

Let $\wt f$ be a sign labeling of $G_c$ and let us define $f$ as follows:
\begin{equation*}
f(C,e):=\wt f\big(\eta(C),\pi(e)\big).
\end{equation*}
In order to prove $\cdim (G)\le \cdim (G_c)$, it is sufficient to show that  $\sum_{i=1}^{n}\lambda_i \wt f\big(\eta(C_i)\big)=0$ which implies the following
\begin{equation}\label{equ:relation G_c implie G}
\sum_{i=1}^{n}\lambda_i f(C_i)=0
\end{equation}
It is obvious that $\sum_{i=1}^{n}\lambda_i f(C_i)=0$ holds if and only if $\sum_{i=1}^{n}\lambda_i f(C_i,e)=0\quad (\forall e\in E)$. But it is equivalent to \begin{equation}
\sum_{i=1}^{n}\lambda_i \wt f\big(\eta(C_i),\pi(e)\big)=0, \ \ \ \ \ (\forall e\in E).\label{equ:f}
\end{equation}
The relation (\ref{equ:relation G_c implie G}) can be obtained by  (\ref{equ:f}).
\\\\
{\bf Proof of Parts (vi), (vii) and (viii):}  We prove these parts simultaneously in the following steps:
\begin{enumerate}
\item\label{Step1} In the first step, we prove that if part (viii) holds for $G$, then the  part (vi) holds, also.
\item\label{Step2} In the second step, we show that if part (vi) holds for $G$, then the  part (vii) holds, also.
\item\label{Step3} In the third step, we prove that if the parts (viii) and (vii) hold for graphs $H$ satisfying
\mbox{$\cdim (H) < \cdim (G)$}, then part (viii) holds for $G$.
\item\label{Step4} In the final step, we show that the parts (vi), (vii) and
(viii) hold.
\end{enumerate}
{\bf Step 1}: Put $H:= C_1\cap C_2$ and let $\mathfrak{N}(H)=m$. If $m = 0$, then  $H$ is a
empty graph and we get the proof. In the case of $m = 1$, let us put
\begin{align*}
C_{11}:=C_1-(C_1\cap C_2)\  \ \text{and}\ \  C_{21}:=C_2-(C_1\cap C_2).
\end{align*}
Let us consider $K=(V_K,E_K,r_K):=C_1\cup C_2$. It is easy to see that $\cdim (K) = 2$ and $\cycle(K)=\{C_1,\ C_2,\ C_{11}\cup C_{21} \} $.
Since $f\big|_{ E_K\times\cycle(K)  } $ is an optimal sign labeling for $K$, then we get
\begin{align*}
f(C_1) - f(C_2)= \pm f(C_{11}\cup C_{21}).
\end{align*}
This implies that $f(C_1)\limfunc_{E_H} || f(C_2)\limfunc_{E_H}$, where
$H=(V_H,E_H,r_H)$.

Now, suppose that $(viii)\Rightarrow (vi)$  holds for $m < n$. We are going to prove $(viii)\Rightarrow (vi)$ for $m = n$.
We can assume that $2 \leq n $. Let $A_1$ and $A_2$ be
two distinct connected components of $H$. It is remarkable that  both of
them do not necessarily contain edges. But one can assume that $A_1$ has at least one edge.
We get
\begin{align*}
C_1 = A_1\cup C_{11} \cup A_2 \cup C_{12},\\
C_2 = A_1\cup C_{21} \cup A_2 \cup C_{22},
\end{align*}
where $C_{11}$ and $C_{12}$ are connected components of $C_1 - (A_1\cup A_2)$,
and $C_{21}$ and $C_{22}$ are connected components of $C_2 - (A_1\cup A_2)$. It is easy to see that,  each
$C_{ij}\ (1\leq i,\  j\leq 2)$ can be considered a path body. Without loss of
generality, one can assume that the common endpoints of $C_{11}$ and $A_1$
are different from common endpoints of $C_{22}$ and $A_1$. Note that
each $C_{ij}\ (1\leq i,\ j\leq 2)$ has exactly one common vertex with
$A_k\ (1\leq k\leq 2)$. In fact, this vertex is a common endpoint. We have two main cases as follows:
\item[]Case (1):  The common endpoints of $C_{11}$ and $A_2$ are not equal with the common
endpoints of $C_{22}$ and $A_2$;\\
\item[]Case (2):  The common endpoints of $C_{11}$ and $A_2$ are the same with the common
endpoints of $C_{22}$ and $A_2$.\\

Let us define   $C_3$  as follows
\begin{equation}
C_3:=\left\{
\begin{array}{ll}
A_1\cup C_{11} \cup A_2\cup C_{22},  &\text{in case (a)},\\
&\\
A_1\cup C_{11} \cup C_{22},  &\text{in case (b)}.
\end{array}
\right.
\end{equation}
It is easy to see that, $C_3\in \cycle(G)$. Since the connected components of
$C_1\cap C_3$ and $C_2\cap C_3$ are less than the
connected components of $C_1\cap C_2$, then the connected components of
$C_1\cap C_3$ and $C_2\cap C_3$  are less than $n$. Put
\begin{align*}
E_1:=\text{Edges of}\ (A_1\cup C_{11}\cup A_2), \   E_2:=\text{Edges of}\ (A_1\cup C_{22}\cup A_2) \ \text{and}\ E_0:=\text{Edges of}\ A_1.
\end{align*}
 Then, $f(C_1)\limfunc_{E_1} || f(C_3)\limfunc_{E_1}$ implies $f(C_1)\limfunc_{E_0} || f(C_3)\limfunc_{E_0}$ and $f(C_2)\limfunc_{E_2} || f(C_3)\limfunc_{E_2}$ yields $f(C_2)\limfunc_{E_0} || f(C_3)\limfunc_{E_0}$.  Thus  we get $f(C_1)\limfunc_{E_0} || f(C_2)\limfunc_{E_0}$. Since  $A_1$ is an arbitrary connected component of $H$ containing at least one edge, then we get the proof.\\\\
{\bf Step 2}: Let the part (vi) holds for $G$ and $w=\{w_i\}_{i = 0}^{2k}$ be a piece in $G$.
If there exists a cycle containing $w$, then the part (vi) implies that our
claim holds. Also, if $k= 1$ then we get the proof. Obviously, every piece with length 2 is contained in some cycle. Thus, in the case of  $k = 2$,
$w$ is  contained in some cycle and  part (vi) implies our claim.


Now, let $k > 2$. Suppose that $\ov{w}$ and $\wt{w}$ are obtained by $w$ after removing the first and the last edges, respectively. Note that
if $w$ be contained in some cycle, then we get the proof. So, we assume that
$w$ is not contained in some cycle. Thus, our claim for $\ov{w}$ and $\wt{w}$ holds (as $\ov{w}$ and $\wt{w}$ are pieces).
This means that there exists the  labelings  $\ov{g}$ and $\wt{g}$ for $\ov{w}$ and $\wt{w}$, respectively, as we  desired. But they have at least one common edge
which enables us to construct $g$ on entire $w$ with respect to $\ov{g}$ and $\wt{g}$. Then, if $w$ is a piece, we get the proof.

Now, suppose that $w$ is not a piece. It can be uniquely written as the union of
edge distinct pieces. It is easy to see that the mentioned  union of labeling  $g$ on each
piece has desired properties.\\\\
{\bf Step 3}: If $\cdim (G) < 3$,  then we get the proof. Suppose that the proof holds for the case $\cdim (G) < n+1$. We are going to show that the proof  is also valid for $\cdim (G)= n+1$.
Let $P=G(w)$ be a path segment body. Define $K=(V_K, E_K, r_K):=G-P$. If $w$ be also
a cycle segment, then we obtain
\begin{align*}
\cdim (G) = \cdim (K) + 1.
\end{align*}
Otherwise, there  exists  $C_0\in \cycle(G)$  such that the following holds
\begin{align*}
P\subseteq C_0\ \ \text{and} \ \ \ P_0:=C_0-P.
\end{align*}
Thus
\[
 C_0=P_0\cup P.
\]
Let us define $\wt f$ as follows
\begin{align*}
\wt f(C,e):=%
\left\{\begin{array}{lcl}
f(C, e),& &C\subseteq K,\quad e\in E_K,\\
0,& &C\subseteq K,\quad e\notin E_K,\\
g_0(e)+g(e),& &C=C_0,\\
g'(e)-g(e),& &C\nsubseteq K,\quad C\neq C_0,\\
0,& &C\nsubseteq K,\quad C\neq C_0,\quad e\notin E_K,
\end{array}
\right.
\end{align*}
where $g$ is an arbitrary (but fixed) labeling of $P$,
$g_0$ is a labeling obtained by applying the part (vii) on $P_0$ and $P$ ($C=P_0'\cup P$), and  by (\ref{P1}) we get
\begin{align}
g_0(P_0) + g'(P_0') = \sum_{i = 1}^{r} \lambda_i f(C_i)\label{P2}
\end{align}
which  also defines $g'$.

Now, we  are going to show that  $\dim \spanned{\wt f\big( \cycle(G) \big)} = n+1$ holds. In order to prove it, first note that by (\ref{P2}), we have
\begin{align*}
\wt f(C) + \wt f(C_0) = \sum_{i = 1}^{r} \lambda_i f(C_i)= \sum_{i = 1}^{r} \lambda_i \wt f(C_i)\in \spanned{\wt f\big( \cycle(G) \big)}.
\end{align*}
Also,  $\wt f(C_0)$ is not a member of $\spanned{\wt f\big( \cycle(K) \big)}$.
This means that $\cdim (G) \leq n+1$ and because of $n = \cdim (K) < \cdim (G)$, we get $\cdim (G) = n+1 =\cdim (K)+1$. Then,  $\wt f$ is an optimal sign labeling for $G$.

Also, let $\wt f$ be an arbitrary optimal sign labeling for $G$. By the part (i), we obtain
\begin{align*}
n = \cdim (K)\leq \dim \spanned{\wt f\big( \cycle(K) \big)} \leq n
\end{align*}
which means that $\wt f\big|_{\cycle(K)\times E_K}$ is an optimal sign labeling
for $K$. Up to now,  we prove that the following holds:
\begin{align}
\cdim (G)= \cdim (K)+1.\label{P3}
\end{align}
More precisely, we  get the following:
\begin{center}
\fbox{
  \parbox{.975\textwidth}{
  For each path segment $w$ in $G$, if $K:=G-G(w)$ then the following holds
  \begin{align}
1+\cdim (K)= \cdim (G). \ \ \ \label{P4}
\end{align}
Also,
$\wt f\big|_{\cycle(K)\times E_{K}}$ is an optimal sign labeling  for $K$.
}}
\end{center}

 Let $H=(V_H, E_H ,r_H)$ be an arbitrary subgraph of $G$. If $H = G$, then our
claim holds. So,  assume that $H\subsetneq G$. Obviously, $H^c\neq G$. Then, there exists a path segment $w$ such that $P:=G(w)\cap H^c = \emptyset$
which yields $H^c\subseteq G - P$. Let $\wt f$ be an optimal sign labeling for $G$. By (\ref{P4}), $\wt f_0:=\wt f\big|_{\cycle(G-P)\times E_{G-P}}$ is an optimal sign labeling for $G-P$. By the induction
hypothesis,  $\wt f_0\big|_{\cycle(H^c)\times E_{H^c}}$ is an optimal sign labeling
for $H^c$.  On the other hand, we have $\wt f_0\big|_{\cycle(H^c)\times E_{H^c}}=\wt f\big|_{\cycle(H^c)\times E_{H^c}}$. Therefore, $\wt f\big|_{\cycle(H^c)\times E_{H^c}}$ is an optimal sign labeling
for $H^c$ which implies that $\wt f\big|_{\cycle(H)\times E_{H}}$ is an optimal sign labeling for $H$. \\\\
{\bf Step 4}: In the case of $\cdim (G)<2$, the parts (vi), (vii) and  (viii) hold. By using the inductive method, suppose that if  $\cdim (G)<n$ then (vi), (vii) and  (viii) hold. By the step 3, (viii) holds for $G$ when $\cdim (G)=n$. By steps 1 and 2, it follows that (vi) and (vii) hold for $G$ when $\cdim (G)=n$. This completes the proof.\\\\
{\bf Proof of Part (ix):} By (\ref{P4}),  we get the proof.
\end{proof}

\bigskip
\begin{Def}
A  graph $G=(V, H, r)$ is called \emph{cycle separable} if there exists at least two subgraphs
$G_1=(V_1, E_1, r_1)$ and $G_2=(V_2, E_2, r_2)$ of $G$  such that $G=G_1\cup G_2$, $E_1\cap E_2=\emptyset$, $\#E_1>0$, $\#E_2>0$ and
every $C \in \cycle(G)$ lies in one of $G_1$  or $G_2$. Also, $G$ is called \emph{strong cyclic} if it is not cycle separable.
\end{Def}

Let us define the following relation on  the edges of a particular graph $G = (V,E,r)$
\begin{equation*}
e_1\simeq e_2\qquad \equiv\qquad \text{there exists $C\in \cycle(G)$ such that contains $e_1$ and $e_2$. }
\end{equation*}
This is an equivalence relation on any bridgeless graph.
Let $E_0 \subseteq E$ be an equivalence class of $\simeq$ and
$V_0= \bigcup_{e\in E_0} r(e)$. Then the induced subgraph of $G$ with
vertices $V_0$ and edges $E_0$ is called  a \emph{cycle component} of $G$.

\bigskip

It is easy to see that, the following hold
\begin{enumerate}
\item[] (i) Every strong cyclic graph is cactus free;
\item[] (ii) $G$ is strong cyclic if  it has not any cycle component except $G$;
\item[] (iii) Any bridgeless graph is a union of its (edge disjoint) cycle components;
\item[] (iv) If Goddyn's conjecture holds for the strong cyclic graphs, then  it holds permanently.
\end{enumerate}

\begin{prop}\label{prop:important}
Let $G=(V, E, r)$ be a connected bridgeless graph with $\cdim (G)\ge 2$. Then the following hold:
\begin{enumerate}
\item\label{part:cycle segments and path segments are seperable}%
Let $P$ and $H$ be path and cycle segments in $G$, respectively,   which
are edge disjoints.  Then there exists  $C\in \cycle(G)$ such that
$P\subseteq C$ and $H\nsubseteq C$.
\item\label{part:Graph mines a cycle segment is bridgeless}%
For any cycle segment $H$ in $G$, the graph $\widehat{G}:=G-H$ remains bridgeless.
\item\label{part:each connected component at most one connected component }%
Let $H$ be a cycle segment of $G$ which is not connected. In this case,
for any $C\in \cycle(G)$, each connected component of $G - H$ contains
at most one connected component of $C-H$.
\item\label{part:same number of connected components}%
Let $G$ be a strong cyclic graph. Then,
for any cycle segment $H$ of $G$, the following holds
\begin{equation}
\mathfrak{N}_0(H)=\mathfrak{N}(G-H).\label{P5}
\end{equation}
Moreover, every connected component of
$C-H$ lies in exactly one connected component of $G-H$.
\item\label{part:at most one not connected cycle segment}%
Let $G$ be a strong cyclic graph. Then,
there exists at most one disconnected cycle  segment.
\item\label{part:excistanse of removable path segment}%
Let $G$ be a strong cyclic graph. Then,  there exists a path $P$ such that $G-P$
is a strong cyclic graph. Consequently, if $G$ is a connected
bridgeless graph, then there exists a path $P$ such that $G-P$
is a  connected bridgeless graph.
\item\label{part:existanse of subgraph per dimension}%
Let  $G$ be a strong cyclic graph. Then, for  any $1\le m\le \cdim (G)$,  there exists a  strong cyclic graph $G_m$
such that $G_m\subseteq G$ and $\cdim (G_m)= m $.
\item\label{part:separating cycle-path segment}%
Let $\cdim (G)\ge 3$. Then, for every cycle $C$ in $G$,  there exists a path
segment $P$ such that $C\subseteq G-P$ and $G-P$ is a
connected bridgeless graph.
\item\label{part:path segment removal involve at most 4}%
For any path segment $w$ of $G$, there
exists  at most 4 cycle segments $H$ in $G$ such that $G(w)\subseteq G-H$ and $w$ is not a path segment of
$G-H$.
\item\label{part:path segment-intersection of two cycle}%
Let $G$ be a cactus-free graph. Then,
for any cycle $C$ and any cycle segment $H$ of $G$ which lies in $C$, there exists a
cycle $C_0$ such that $H=C\cap C_0$.
\item\label{part:special generator}
Let $G$ be a strong cyclic graph and $\cdim (G)=n$. Then, for every cycle $C\in \cycle (G)$
and optimal sign labeling $f$,  there exists an $f$-generator
$\generator{A}=\{C_i\}_{i=1}^n$  such that
\begin{enumerate}
\item\label{part:part:a}%
$C_1=C$;
\item\label{part:part:b}%
$\charac{A} \le 2$, where $\charac{A}$ is the characteristic map of $\generator{A}$;
\item\label{part:part:c}%
There exists a map $s:\generator{A}\rightarrow\{-1,1\}$ and cycles $\{C_i'\}_{i=1}^n$ such
that
\begin{equation}
\sum_{i=1}^{t} s(C_i)f(C_i)=f(C_t'),\qquad (\forall \ 1\le t\le n).
\end{equation}
\end{enumerate}
\end{enumerate}
\end{prop}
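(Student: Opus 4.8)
\medskip
\noindent\textbf{Proof idea.} The plan is to prove the eleven parts in the stated order, feeding each into the next with Proposition~\ref{prop:first and basic} as the standing toolbox. Parts (1)--(3) are local rerouting facts. For (1): a path segment $P$ lies in some cycle $C$ by definition; if $C$ also contains the edge-disjoint cycle segment $H$, then $C\cap H=H$ and $C-H$ splits into subpaths joining cycle generic vertices, so replacing one such subpath by an alternative route---available since $G$ is bridgeless and $H$ is a cycle segment---gives a cycle still containing $P$ but missing an edge of $H$, and iterating removes $H$ entirely. For (2): an edge $e$ that became a bridge of $G-H$ would lie on a cycle $C$ of $G$ meeting $H$, hence with $C\supseteq H$, and the part of $C$ outside $H$ reconnects the endpoints of $e$ inside $G-H$, a contradiction. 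Part (3) is immediate, since a component of $G-H$ meeting two components of $C-H$ would contain a walk joining them, giving a cycle that shares edges with $H$ without containing $H$.

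Parts (4) and (5) exploit strong cyclicity. For (4) I would match the edge-bearing components of $H$ with the components of $G-H$: each such component of $H$ is met by a cycle of $G$, which by (3) lands in exactly one component of $G-H$; were the resulting map not a bijection, the surplus part of $G-H$ together with its complement would be a cycle separation of $G$, contradicting strong cyclicity. Part (5) follows, since two disjoint disconnected cycle segments would force $\mathfrak{N}(G-H)>\mathfrak{N}_0(H)$ for one of them via (4). Part (6) is the engine: a strong cyclic graph with $\cdim(G)\ge 2$ is cactus-free and has no leaf cycle, so every cycle carries at least two path segments; I would pick a path segment $P$ on a cycle $C_0$ with $P$ not a cycle segment (so $C_0-P$ is a path reconnecting the ends of $P$), chosen extremally in $G_c$ so that deleting it does not disconnect $G$; then $G-P$ is bridgeless by the argument of (2), connected by the extremal choice, and not cycle separable by (4)--(5), hence strong cyclic, and the final clause is the passage to cycle components (remarks (iii),(iv)).

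Part (7) follows by iterating (6), each deletion lowering $\cdim$ by one via Proposition~\ref{prop:first and basic}(ix), and halting at the target $m$. Part (8) refines (6): when $\cdim(G)\ge 3$ the prescribed cycle $C$ carries at least two path segments, so one of them can be spared while a path segment $P$ with $C\subseteq G-P$, and $G-P$ connected bridgeless (and, for strong cyclic $G$, again strong cyclic), is still deleted. Parts (9) and (10) are bookkeeping on cycle segments: in (9) a path segment $w$ has at most two cycle generic endpoints, and near each at most two cycle segments can be destroyed by deleting $w$ (Proposition~\ref{prop:first and basic}(iv)), giving at most $4$; in (10) the cactus-free hypothesis gives $C$ a second path segment, and closing up $C-H$ along it produces a cycle $C_0$ with $C\cap C_0=H$.

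The substantial part, and the one I expect to be the main obstacle, is (11); the plan is induction on $n=\cdim(G)$. The base $n=2$ is a direct construction of $C_1=C$ and a second cycle $C_2$ meeting $C_1$ in a single path, so that $\{f(C_1),f(C_2)\}$ spans $f(\cycle(G))$ and $C_1\bigtriangleup C_2$ is a cycle whose sign-vector is, by Proposition~\ref{prop:first and basic}(vi), a signed sum $\epsilon_1 f(C_1)+\epsilon_2 f(C_2)$---which yields (a)--(c) with the characteristic-map bound automatic for two cycles. For $n\ge 3$, apply (8) to get a path segment $P$ with $C\subseteq\widehat G:=G-P$, $\widehat G$ strong cyclic with $\cdim(\widehat G)=n-1$ (Proposition~\ref{prop:first and basic}(ix)); since $f$ restricts to an optimal labeling of $\widehat G$ by Proposition~\ref{prop:first and basic}(viii), induction gives a special generator $\{C_i\}_{i=1}^{n-1}$ of $\widehat G$ with $C_1=C$, characteristic map $\le 2$, and telescoping cycles $\{C_i'\}_{i=1}^{n-1}$. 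One then adjoins one more cycle $C_n$ of $G$ through $P$; by Proposition~\ref{prop:first and basic}(ix) and a dimension count, $\{C_i\}_{i=1}^{n}$ is an $f$-generator of $G$. The crux is to choose $C_n$ so that simultaneously (b) every edge already doubly covered by $\{C_i\}_{i=1}^{n-1}$ is avoided by $C_n$---arranged by routing $C_n$ through $P$ and closing it along not-yet-doubled edges, via (1), (10) and Proposition~\ref{prop:first and basic}(vi),(vii)---and (c) the new partial sum $\sum_{i=1}^{n}s(C_i)f(C_i)$ is again the sign-vector of a genuine cycle $C_n'$, which should hold because adding $s(C_n)f(C_n)$ to $f(C_{n-1}')$ realizes at the level of bodies a symmetric difference $C_{n-1}'\bigtriangleup C_n$ that is a cycle body exactly when the overlap is a single path---precisely what the bound in (b) and Proposition~\ref{prop:first and basic}(vi) enforce. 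Reconciling (b) and (c) in one choice of $C_n$ without disturbing $C_1=C$ is where the genuine difficulty lies.
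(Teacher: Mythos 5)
There is a genuine gap, and it is at the load-bearing step, part (\ref{part:excistanse of removable path segment}). You propose to delete a path segment $P$ that is \emph{not} a cycle segment and to conclude that $G-P$ stays bridgeless ``by the argument of (2)''. But the argument of part (\ref{part:Graph mines a cycle segment is bridgeless}) rests exactly on the deleted subgraph being a cycle segment, so that every cycle either contains it entirely or is edge-disjoint from it; for a path segment that is not a cycle segment this dichotomy is false. Concretely, let $G$ be two disjoint cycles $C_1,C_2$ joined by two vertex-disjoint paths $P_1,P_2$. This $G$ is strong cyclic with $\cdim(G)=3$; its path segments are the two arcs of $C_1$, the two arcs of $C_2$, and $P_1,P_2$; the four arcs are connected cycle segments, while $P_1$ and $P_2$ are precisely the path segments that are \emph{not} cycle segments (they form the single disconnected cycle segment $P_1\cup P_2$), and deleting either of them turns every edge of the other into a bridge. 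So under your constraints there is no admissible choice of $P$ at all, let alone one for which $G-P$ is bridgeless. The paper instead deletes a \emph{connected cycle segment}, whose existence is what part (\ref{part:at most one not connected cycle segment}) is for, and then part (\ref{part:Graph mines a cycle segment is bridgeless}) applies verbatim. The same conflation appears in your sketch of part (\ref{part:Graph mines a cycle segment is bridgeless}) itself: if $C\supseteq H$ and $e\in C$, the portion of $C$ outside $H$ through $e$ is a path, and removing $e$ from it does \emph{not} reconnect the endpoints of $e$ inside $G-H$ (the reconnection runs through $H$); the paper's route is to show that every cycle through $e$ would have to contain $H$ and then to contradict part (\ref{part:cycle segments and path segments are seperable}) applied to the path segment containing $e$.

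On part (\ref{part:special generator}), your induction is genuinely different from the paper's: you delete a path segment disjoint from $C$ (via part (\ref{part:separating cycle-path segment})), keep $C_1=C$ throughout, and try to append a final cycle $C_n$ through the deleted segment; the paper deletes a path segment $P$ \emph{inside} $C$, uses part (\ref{part:path segment-intersection of two cycle}) to get $C_0$ with $P=C\cap C_0$, recurses on $\widehat G=G-P$ with the surrogate cycle $\widehat C=(C\cup C_0)-P$, and then prepends $C_1=C$, $C_2=C_0$ to the inductively obtained family. Your version leaves the crux --- simultaneously securing $\charac{A}\le 2$ and the telescoping identity in (c) by one choice of $C_n$ --- explicitly unresolved, so the proposal does not contain a proof of the one part that the main theorem actually consumes. (For what it is worth, the paper's own treatment of several of these parts, notably (\ref{part:at most one not connected cycle segment}), (\ref{part:path segment-intersection of two cycle}) and (\ref{part:special generator}), is itself only sketched, so matching its level of detail is not the standard to aim for; but as submitted your plan fails already at part (\ref{part:excistanse of removable path segment}).)
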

\begin{proof}
We are going to prove the proposition part by part as follows:\\\\
{\bf Proof of Part \ref{part:cycle segments and path segments are seperable}:}
Since  $P\nsubseteq H$,  then there exists
$C_1\in \cycle(G)$ such that $H\subseteq C_1$ and $P\nsubseteq C_1$. Let $C_2\in \cycle(G)$ such that $P\subseteq C_2$.
If $H\nsubseteq C_2$, then we get the proof. Otherwise, since the degree of every vertex of $K$ is even then  $K:=(C_1\cup C_2) - (C_1\cap C_2)$ is a bridgeless graph.  Obviously,
$P\subseteq K$ and this means that  there exists $C\in \cycle(K)\subseteq \cycle(G)$  such that
$P\subseteq C$. Since $H\nsubseteq C$, then we get the proof.
\\[.3cm]%
{\bf Proof of Part \ref{part:Graph mines a cycle segment is bridgeless}:} Let $H$ be a cycle segment of $G$. One can write
\begin{equation}
\bigcup_{i=1}^s G_i=G-H,
\end{equation}
where $\{G_i\}_{i=1}^s$ are connected components of $G-H$.
We are going to prove that every $G_i\ (1\le i\le s)$ is bridgeless.
On contrary, assume that there exists $1\le i_0\le s$ such that the edge $e$ be a bridge of $G_{i_0}$.
First, we prove that every cycle that contains $e$, also contains $H$.
Let $C$ be a cycle in $G$ which contains $e$. If $C$ does not contain $H$, then $C\subseteq G-H=\cup_{i=1}^s G_i$.
Since $C$ is connected and contains $e$, it follows that $C\subseteq G_{i_0}$. But this means that
$e$ can not be a bridge for $G_{i_0}$ which
is not possible. Thus, $H$  has common edge with $C$ and
therefore $H\subseteq C$.

Note that  we obtain ``every cycle which contains $e$, also contains $H$". By considering the part
\ref{part:cycle segments and path segments are seperable},  it follows that $H$  contains $e$ which is not possible. Then we get the proof of part  \ref{part:Graph mines a cycle segment is bridgeless}.
\\[.3cm]%
{\bf Proof of Part \ref{part:each connected component at most one connected component }: }
On the contrary, assume that $G_0$ is a connected component of $G-H$ and there exist two
$C_1, C_2\subseteq G_0$,  where $C_1$ and $C_2$ are two different connected components
of $C-H$. In this case, we have
\begin{align*}
C=C_1 \cup C_2 \cup H_1 \cup H_2,
\end{align*}
where $H_1$ and $H_2$ are connected components of $H-(C_1\cup C_2)$. Define
\begin{align*}
\overline{C}:= (C_1\cup C_2)\cup (H_1\cup P),
\end{align*}
where $P$ is a path body in $G_0$ such that connects endpoints of $C_1$ and
$C_2$ which are not endpoints of $H_1$. By the part \ref{part:Graph mines a cycle segment is bridgeless},
$G_0$ is bridgeless and the  Menger theorem (Max-flow Min-cut theorem) guarantees  the existence of such $P$. On the other hand, one can show that $\overline{C} \in \cycle(G)$ holds. But this contradicts with the fact that $H$ is cycle segment.
\\[.3cm]%
{\bf Proof of Part \ref{part:same number of connected components}:} Let $\mathfrak{N}(H):=s$. In the case of  $s = 1$, by part
\ref{part:Graph mines a cycle segment is bridgeless} and  Menger's theorem, we get the proof.

Assume that $s > 1$, i.e., $H$ is not connected. Let us define
\begin{align*}
\bigcup_{i = 1}^s H_i:=H,\qquad \bigcup_{i = 1}^r G_i:=G - H,
\end{align*}
where $\{H_i\}_{i = 1}^s$ and  $\{G_i\}_{i = 1}^r$ are connected components of
$H$ and $G-H$, respectively. We are going to show that each $G_i$ has at least two common vertices with connected components of $H$. Otherwise,   we have three main cases as follows:
\begin{enumerate}
\item Let for some $1\leq i_0\leq r$,  $G_{i\ts{0}}$ have no common vertex with any of $\{H_i\}_{i = 1}^s$. Then $G$ is not connected which is impossible;
\item Let for some $1\leq i_0\leq r$,  $G_{i\ts{0}}$  have one common vertex with one $H_{j\ts{0}}$, $(1\leq j_0\leq s)$. Then
each edge of $H_{j\ts{0}}$  is a bridge for $G$ which is impossible.
\item Let for some $1\leq i_0\leq r$,  $G_{i\ts{0}}$ have one common vertex with $H_{j\ts{0}}$  and $H_{j\ts{1}}$ $(1\leq j_0<j_1\leq s)$
(i.e., the vertex is common in $G_{i\ts{0}}$, $H_{j\ts{0}}$ and $H_{j\ts{1}}$).  In this
case, since $G$ is a strong cyclic graph then there exists $C\in \cycle(G)$ such that
contains exactly one of $H_{j\ts{0}}$ or $H_{j\ts{1}}$. Also, $C$  has common
edges with $G_{i\ts{0}}$ (because otherwise $G_{i\ts{0}}$  becomes a
cycle component of $G$ which is impossible). Since $H_{j\ts{0}}$ and $H_{j\ts{1}}$
lie in $H$ and $H$ is a cycle segment, then we get a contradiction.
\end{enumerate}
According to these three main cases, it follows that  $G_i$ has at least two common vertices with connected components of $H$. \\\\

Every common vertex  between any $G_i$ $(1\leq i\leq r)$ and $H_j$ $(1\leq j\leq r)$ is an endpoint of $H_j$. Otherwise,  such a vertex must be cycle generic and it is in the middle of a path segment- i.e., $H_j$- which  is impossible. Now, let us define
\[
\mathcal{B}:=\Big\{v\in V|\ v \text{ is common in some $G_i\ (1\leq i\leq r)$ and $H_j\ (1\leq j\leq s)$}\Big\}.
\]
Let us enumerate the members of $ \mathcal{B}$ throughout by two different methods. In the first method, for every $G_i\ (1\leq i\leq r)$, we have at least two members of $\mathcal{B}$ which yields $2r\leq \# \mathcal{B}$. In the second one, for every $H_j\ (1\leq j\leq s)$, we have exactly two members of $\mathcal{B}$ which yields $\# \mathcal{B}=2s$. Thus, $r\leq s$. On the other hand, the part \ref{part:each connected component at most one connected component } implies that  $s \leq r$. Then  $s = r$ and we get (\ref{P5}). By considering the part \ref{part:each connected component at most one connected component }, we get the rest of proof.
\\[.3cm]%
{\bf Proof of Part \ref{part:at most one not connected cycle segment}:} If all of the cycle segments of
$G$ are connected, then nothing remains to prove. On the contrary, let us assume that
$H$ is a disconnected cycle segment of $G$. 
We are going to prove that all of the other path segments lie  in different cycle segments, which implies that they are cycle segments. This implies that there is not exist any disconnected cycle segment other than $H$, and then we will get the proof.

In order to prove the above claim, one  notes that every $G_i$ is bridgeless, where $G - H=\bigcup_{i = 1}^r G_i$ and    $\{G_i\}_{i = 1}^r$ are the connected components of $G-H$. Let $P_1$ and $P_2$ be two different path segments of $G$ (out of $H$). Then, we have two cases as follows:\\\\
{\bf Case 1:}  Let $P_1$ and $P_2$ belong to  two different $G_i$. Then one can find that $C_1, C_2\in \cycle (G)$ $(C_1 \neq C_2)$ which contains $P_1$ and $P_2$, respectively. This completes the proof.\\\\
{\bf Case 2:} Let $P_1$ and $P_2$ belong to the same connected component  $G_{i_0}$'s. Suppose that $H\subseteq C\in \cycle(G)$ and $P = C\cap G_{i_0}$ ($P$ is not empty by the part \ref{part:same number of connected components}). Then, one  can replace
$P$ with $P_1\subseteq P'_1$ and $P_2\subseteq P'_2$ such that $(C-P)\cup P'_1$ and
$(C-P)\cup P'_2$ be cycle bodies containing  $P_1$ and $P_2$, respectively.  Since $G_{i_0}$ is bridgeless, then  $P'_1$ and $P'_2$ can be chosen without any common edge with $P_2$ and $P_1$, respectively. In this case, we get the proof.
\\[.3cm]%
{\bf Proof of Part \ref{part:excistanse of removable path segment}:}
Let $G$ be a strong cyclic graph. Since  $\cdim (G)\ge 2$,  then $G$ has at least two cycle segments $H_1$ and $H_2$. By the part \ref{part:at most one not connected cycle segment}, at least one of $H_1$ or $H_2$ is connected. Without any loss  of generality, suppose that $H_1$ is connected. It is easy to see that,  $G-H$ is a strong cyclic graph. The rest of the claim is obvious.
\\[.3cm]%
{\bf Proof of Part \ref{part:existanse of subgraph per dimension}:} We are going to use the induction on $\cycle (G)$. If $\cycle (G)=n+1$, then for $m:=n+1$ we get $G_m=G$. Let $m=n$. Then,  by the previous part,  $G_m=G-P$. By applying the  induction hypothesis on $G_n$, we get the proof.
\\[.3cm]%
{\bf Proof of Part \ref{part:separating cycle-path segment}:} In the case of $\cdim (G)=3$, the proof is trivial. Let  $\cdim (G)>3$. Then, we have two cases as follows:\\
{\bf Case (1):} Let $G$ be a strong cyclic graph. In this case, it is crystal clear that
$C$ does not contain all path segments. Then, there exists  a connected cycle segment out of $C$, namely $P$. Thus $G-P$
is what we want.\\
{\bf Case (2):} Suppose that $G$ is not a strong cyclic graph. By using the same method used in case (1) on any cycle component of $G$, we get the proof.\\\\
\smallskip
\noindent
{\bf Proof of Part \ref{part:path segment removal involve at most 4}:} One can see that if the endpoints of $w$ remain cycle generic in $G-H$, then $w$  is a path segment for $G-H$. Let us remark that  $G$ is a connected and  bridgeless graph with $\cdim (G)\geq 2$. If $w$ is not a path segment of $G-H$, then for one of its endpoints, namely  $v$,  one needs to have $\deg_{G-H}(v) = 2$. On the contrary, assume that there are cycle segments $\{H\}^5_{i=1}$ such that $w$ is not a path segment for $G-H_i$, $(1\leq i\leq 5)$. Suppose that $v_0$ and $v_1$ are the endpoints of $w$. Let us remark that the cycle segments are edge disjoints. If $\deg_G(v_0)=\deg_G(v_1)=3$, then we have at most 4 cycle segments, namely $H$, such that $w$ is not a path segment of $G-H$. Then,  at least one of  $\deg_G(v_0) \geq 4$ or $\deg_G(v_1) \geq 4$  holds. Without loss of generality, suppose that $\deg_G(v_0) \geq 4$   holds. By the part 5, at last one of that 5 cycle segments is not a path segment. Then  $v_0$ remains cycle generic in at least 4 cases (out of 5). This means that $v_1$ is not a cycle generic vertex of $G-H_i$ for those 4 cases. It follows that $v_1$ satisfies $\deg(v_1) \geq 5$. This is a contradiction.
\\[.3cm]%
{\bf Proof of Part \ref{part:path segment-intersection of two cycle}:}
In order to prove this part, it is sufficient to  prove it for a strong cyclic graph.
If $H$ is not a connected cycle segment of $G$, then by the similar method used in the part 4, we get the proof.
If $H$ is connected cycle segment of $G$, then $H$ is also a path segment. If  $G$ has less than 7 cycle segments, then it is easy to see that the proof holds. Now, assume that $G$ has at least 7 cycle segments. Thus, $G$ has at least 6 path segments which are also cycle segments. By the part 9, there exists path segment $P$ (which is also a cycle segment) in $G$ such that $H$ is a path segment in $\widehat{G}:=G-P$. By using the induction on the number of cycle segments of $\widehat{G}$, we get the proof.
\\[.3cm]%
{\bf Proof of Part \ref{part:special generator}:} For the case of $\cdim (G)=2$, proof is trivial. We are going to use induction on $\cdim (G)$. Assume that
our claim holds  for $\cdim (G)\le m$. Let $\cdim (G)=m+1$. Suppose that $P$ is a path segment in $C$. Then
by part \ref{part:path segment-intersection of two cycle},  there exists a simple
cycle $C_0$ in $G$ such that $P = C\cap C_0$. Since $G$ is a cactus-free graph, then $C_0\neq C$. Let us define
$\widehat{G}:=G-P$ and $\widehat{C}:=(C\cup C_0)-P$. Obviously, $\widehat{G}$ remains a connected, bridgeless and cactus-free  graph. By part (viii) of Proposition \ref{prop:first and basic},  for any  optimal sign labeling  $f$ of $G$,  ${\hat f}:=f|_{\cycle(\widehat{G})\times E_{\widehat{G}}}$  is an optimal
sign labeling  of $\widehat{G}$.  By applying the induction hypothesis on $\widehat{G}$, one can obtain the existence of
$\{\widehat{C}'_j\}_{j=1}^m$ and $\{\widehat{C}_j\}_{j=1}^m$
such that 11(a), 11(b) and 11(c) hold.
 Now, let us define
\begin{equation*}
C_i:=\left\{%
\begin{array}{lcl}
\widehat{C}_{i-1}& & i > 2\\
C_0& & i = 2\\
C& & i = 1
\end{array}
\right.\quad,\quad%
C_i':=\left\{%
\begin{array}{lcl}
\widehat{C}_{i-1}'& & i > 2\\
\widehat{C}& & i = 2\\
C& & i = 1
\end{array}
\right.\qquad(1\le i\le m+1).%
\end{equation*}
It is easy to see that,  $\{{C}'_j\}_{j=1}^{m+1}$ and $\{{C}_j\}_{j=1}^{m+1}$ satisfy 11(a), 11(b) and 11(c).
\end{proof}

\bigskip

Here, we present a characterization formula that describes $\cdim (G)$.
\begin{cor}\label{cor:calculating Cdim w.r.t deg}
Let $G$ be a graph. Then the following holds
\begin{align}
\sum_{v\in V^{\!c}}\big(\degc v - 2\big)=2\big(\cdim (G) -1\big).\label{EC}
\end{align}
\end{cor}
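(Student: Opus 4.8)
The plan is to identify $\cdim(G)$ with the cycle rank of the cyclic part of $G$ and then read off (\ref{EC}) from the handshake lemma. By Proposition~\ref{prop:first and basic}(v) we have $\cdim(G)=\cdim(G^c)$, and the left-hand side of (\ref{EC}) depends only on $G^c$; so it suffices to treat the case $G=G^c$, i.e. every edge of $G$ lies on a cycle, with $G$ connected (the setting in which (\ref{EC}) is intended). Then $V^c=V$ and $\degc v=\deg v$ for all $v$, so $\sum_{v\in V^c}(\degc v-2)=\sum_{v\in V}(\deg v-2)=2|E|-2|V|$, and (\ref{EC}) reduces to the single equality
\[
\cdim(G)=|E|-|V|+1 .
\]
So the entire content is this formula for a connected graph all of whose edges lie on cycles.

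For the inequality $\cdim(G)\le|E|-|V|+1$, I would fix an arbitrary orientation of the edges of $G$ and let $f_0$ be the sign labeling that records, for each cycle $C$ and edge $e\in C$, whether a chosen traversal of $C$ runs along $e$ with or against its orientation. At every vertex the two edges of $C$ incident to it contribute with opposite signs, so each vector $f_0(C)$ lies in the real cycle space $Z(G)$ of $G$; hence $\spanned{f_0(\cycle(G))}\subseteq Z(G)$ and, since $G$ is connected, $\cdim(G)\le\dim Z(G)=|E|-|V|+1$.

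For the reverse inequality $\cdim(G)\ge|E|-|V|+1$ — the part I expect to be the real point — I would show that no sign labeling can beat this bound. Fix an arbitrary sign labeling $f$ and a spanning tree $T$ of $G$; there are exactly $|E|-|V|+1$ non-tree edges, and for a non-tree edge $e$ let $C_e$ be the fundamental cycle, the unique cycle contained in $T+e$. Then $C_e\in\cycle(G)$, $f(C_e,e)\ne 0$ since $e\in C_e$, and $f(C_e,e')=0$ for every other non-tree edge $e'$, since $C_e$ uses no non-tree edge besides $e$. Projecting the vectors $\{f(C_e)\}$ onto the coordinates indexed by the non-tree edges therefore yields, up to sign, the standard basis vectors of that coordinate subspace, so the $f(C_e)$ are linearly independent and $\dim\spanned{f(\cycle(G))}\ge|E|-|V|+1$. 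As $f$ was arbitrary, $\cdim(G)\ge|E|-|V|+1$, which together with the previous paragraph gives the equality and hence (\ref{EC}).

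A second, more self-contained route is to prove (\ref{EC}) by induction on $\cdim(G)$ using only the paper's machinery. The base case $\cdim(G)=1$ is immediate, since $G^c$ is then a single cycle and both sides vanish. For the inductive step, use Proposition~\ref{prop:important}(\ref{part:excistanse of removable path segment}) to pick a path $P=G(w)$ (the body of a path segment that is also a connected cycle segment) with $\widehat G:=G-P$ again connected and bridgeless; then $\cdim(\widehat G)=\cdim(G)-1$ by Proposition~\ref{prop:first and basic}(ix), while deleting the internal vertices of $w$ (each of degree $2$, contributing $0$ to $\sum_v(\deg v-2)$) and lowering the total degree at the endpoints of $w$ by $2$ decreases $\sum_v(\deg v-2)$ by exactly $2$; applying the induction hypothesis to $\widehat G$ then closes the step. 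In either route the only genuine obstacle is bookkeeping: being careful with the reduction to the case $G=G^c$ connected, and, in the inductive route, checking that Proposition~\ref{prop:important}(\ref{part:excistanse of removable path segment}) really supplies $P$ as the body of a single path segment, so that Proposition~\ref{prop:first and basic}(ix) applies.
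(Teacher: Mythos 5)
Your first route is correct but genuinely different from the paper's. The paper proves the corollary by induction on $\cdim(G)$: it invokes part \ref{part:excistanse of removable path segment} of Proposition \ref{prop:important} to strip off a removable path segment $P$ and part (ix) of Proposition \ref{prop:first and basic} to get $\cdim(G)=1+\cdim(G-P)$, then checks that the degree sum drops by exactly $2$ --- which is precisely the ``second, more self-contained route'' you sketch at the end, so that part of your proposal coincides with the paper. Your primary argument instead identifies $\cdim(G)$ with the classical cycle rank $|E|-|V|+1$ of a connected $G=G^c$: the oriented-traversal labeling places every $f_0(C)$ in the kernel of the incidence matrix (giving the upper bound), and the fundamental cycles of a spanning tree stay independent under \emph{any} sign labeling because each uses exactly one non-tree edge (giving the lower bound). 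This is more elementary and self-contained: it bypasses the paper's Propositions entirely, it computes $\cdim$ exactly rather than only tracking its increments, and the spanning-tree argument is airtight where the paper's cited ingredients (notably part \ref{part:excistanse of removable path segment}) carry their own unproven burdens. What it buys the paper's approach in exchange is nothing here --- the induction is only needed if one insists on staying inside the paper's framework. One caveat applies equally to both proofs and is really a defect of the statement: the identity (\ref{EC}) fails when $G^c$ is empty (left side $0$, right side $-2$) or has more than one cyclic component (e.g.\ two disjoint triangles give $0\neq 2$, since $\cdim$ is additive over components while the $-1$ in the right-hand side is not), so the corollary is only true for graphs whose union of cycles is nonempty and connected; you flag this restriction explicitly, whereas the paper's remark that the component-wise identity implies the global one is simply false. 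Subject to that shared caveat, your proof is complete and correct.
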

\begin{proof}
The proof is trivial for the case of $\cdim (G)\leq 1$. Now, if (\ref{EC}) holds for each connected component of a
graph, then it holds for entire graph. By considering the part \ref{part:excistanse of removable path segment}
of  Proposition \ref{prop:important} and the part (ix) of Proposition \ref{prop:first and basic}, we get the proof.
\end{proof}


\smallskip
\begin{cor}
Let $G$ be a strong cyclic graph and  $H$ be an arbitrary  cycle segment of $G$. Then, any connected component of $H$  is either a single vertex or
exactly one path segment of $G$. In particular, if two path segments of $G$ lie in $H$, then they lie in different connected components of $H$.
\end{cor}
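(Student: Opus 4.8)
The plan is to argue directly from the definition of a cycle segment as a maximal member of $\mathcal S=\{K\subseteq G:\text{every cycle meeting }E(K)\text{ contains }K\}$, together with strong cyclicity. I would first record two easy facts: if $K_1,K_2\in\mathcal S$ share an edge then $K_1\cup K_2\in\mathcal S$ (a cycle meeting the union meets, hence contains, one $K_i$, hence contains their common edge, hence meets and contains the other), and consequently every connected component of a member of $\mathcal S$ is itself in $\mathcal S$. Components of $H$ carrying no edge are single vertices, so it suffices to fix a connected component $H_0$ of a cycle segment $H$ with at least one edge and show it is exactly one path segment. Since $G$ is bridgeless, some edge of $H_0$ lies on a cycle $C$, and $H\in\mathcal S$ forces $H\subseteq C$, hence $H_0\subseteq C$. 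A connected, edge-bearing subgraph of a cycle is the whole cycle or a proper arc; $H_0=C$ is impossible, since then $C\in\mathcal S$ means $C$ is the only cycle meeting $E(C)$, so $E(C)$ is a $\simeq$-class and $C$ a cycle component, forcing $G=C$ and $\cdim(G)=1$ against the standing assumption $\cdim(G)\ge 2$. So $H_0$ is an arc of $C$ with distinct endpoints $a,b$, and there remain two nontrivial clauses of the path-segment definition to verify: interior vertices of $H_0$ are not cycle generic, and its endpoints are.

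Let $v$ be an interior vertex of $H_0$, with $e_1,e_2$ its two $H_0$-edges. I claim $\degc(v)=2$. Otherwise $v$ carries a third cycle-edge $e_3$; since $e_1$ also lies on a cycle and, $G$ being strong cyclic, the relation $\simeq$ has a single class, there is a cycle containing both $e_1$ and $e_3$. But such a cycle contains all of $H_0$ (as $H_0\in\mathcal S$), hence is already incident at $v$ to $e_1$ and $e_2$, leaving no room for $e_3$ — a contradiction. Knowing $\degc(v)=2$, every cycle through $v$ uses $e_1$, hence contains $H_0\ni a$, so $\cycle(v)\subseteq\cycle(a)$. For strict inclusion I would use maximality of $H$: let $e_0'$ be the $C$-edge at $a$ lying outside $H$ (it exists and is not in $H$, because $a$ has only one $H_0$-edge). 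If every cycle through $e_0'$ also used that $H_0$-edge, then $H_0\cup\{e_0'\}\in\mathcal S$, and its union with $H$ would properly enlarge $H$ inside $\mathcal S$ — impossible. So some cycle $D$ through $a$ avoids the $H_0$-edge at $a$; since $H_0\in\mathcal S$, $D$ then meets no $H_0$-edge at all, so $D$ avoids $v$. Hence $\cycle(v)\subsetneq\cycle(a)$, $v$ is not $\le_*$-maximal, and together with $\degc(v)=2$ it is not cycle generic.

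For an endpoint $a$, the same maximality argument gives $\deg_G(a)\ge 3$: if $a$ had degree $2$ its edges would be one $H_0$-edge and one edge $e_0'\notin H$, every cycle through $e_0'$ would then also use the $H_0$-edge and contain $H_0$, so $H_0\cup\{e_0'\}\in\mathcal S$ would again enlarge $H$. As $G$ is bridgeless, $\degc(a)=\deg_G(a)\ge 3$, so $a$ (and likewise $b$) is cycle generic. Therefore $H_0$ satisfies all three clauses of the path-segment definition and is a path segment; it is \emph{exactly one} because a finer decomposition would place a cycle-generic vertex in the interior of the arc $H_0$, which we excluded. The ``in particular'' then follows: two path segments with bodies inside one component $H_0$ would each, having cycle-generic endpoints inside the arc $H_0$ (whose only cycle-generic vertices are $a,b$), be forced to equal $H_0$, hence coincide.

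The main obstacle is exactly the claim $\degc(v)=2$ for interior $v$, and more broadly the need to replace the convenient but circular reasoning ``$v$ is cycle generic and lies in the interior of the path segment $H_j$, which is impossible'' — used informally inside the proof of Proposition \ref{prop:important}\,\ref{part:same number of connected components} — by honest deductions from the definition of cycle segment, the edge-overlapping union-closure of $\mathcal S$, strong cyclicity (one $\simeq$-class), and bridgelessness of $G-H$ (Proposition \ref{prop:important}\,\ref{part:Graph mines a cycle segment is bridgeless}). A secondary, purely bookkeeping point is that the statement is literally false in the degenerate cases $\cdim(G)\le 1$ (for instance a single cycle of length $\ge 3$), so the proof should tacitly restrict to strong cyclic $G$ with $\cdim(G)\ge 2$, as in Proposition \ref{prop:important}.
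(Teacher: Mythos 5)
Your route is genuinely different from the paper's: the paper's proof only argues that two path segments lying in the same cycle segment cannot share an endpoint (hence each connected component of $H$ contains \emph{at most} one path segment), whereas you set out to prove the full statement, showing that every edge-bearing component $H_0$ of $H$ actually \emph{is} a path segment by checking all three clauses of Definition \ref{path-cycle-segment} against the closure properties of $\mathcal{S}$, strong cyclicity, and maximality of $H$. Most of your steps are sound: the union-closure of $\mathcal{S}$ over a shared edge, the fact that connected components of members of $\mathcal{S}$ again lie in $\mathcal{S}$, the containment $H\subseteq C$ for some cycle $C$, the exclusion of $H_0=C$, the computation $\degc(v)=2$ for interior $v$, and the endpoint argument ruling out $\deg_G(a)=2$ are all correct; your observation that the statement silently requires $\cdim(G)\ge 2$ is also right.

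There is, however, one genuine gap. The implication ``if every cycle through $e_0'$ uses the $H_0$-edge $e_a$ at $a$, then $H_0\cup\{e_0'\}\in\mathcal{S}$'' is false when $\degc(a)\ge 3$: membership in $\mathcal{S}$ also requires that every cycle meeting $H_0$ contain $e_0'$, but a cycle containing $H_0$ passes through $a$ using $e_a$ together with \emph{some} second edge at $a$, which need not be $e_0'$ if $a$ carries a third incident cycle-edge. Consequently the sentence ``so some cycle $D$ through $a$ avoids the $H_0$-edge at $a$'' does not follow from the stated contrapositive. (Your argument is valid precisely in the case $\degc(a)=2$, which is the case you legitimately exploit in the endpoint paragraph.) The repair is short and uses only tools you already invoke: first run the endpoint paragraph to conclude $\degc(a)\ge 3$, so that $a$ has two incident cycle-edges $f_1,f_2$ outside $H_0$ (hence outside $H$); strong cyclicity, i.e.\ the fact that $\simeq$ has a single class, yields a cycle $D$ containing both $f_1$ and $f_2$, and since $D$ already uses two non-$H_0$ edges at $a$ it cannot use $e_a$, hence meets no edge of $H_0$ (otherwise it would contain $H_0$ and be forced to use $e_a$ at $a$), hence misses $v$. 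This gives $\cycle(v)\subsetneq\cycle(a)$, finishes the interior-vertex clause, and with this substitution your proof goes through.
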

\begin{proof}
Let $w_1$ and $w_2$ be two different path segments, i.e.,  $G(w_1) \neq G(w_2)$, such that $G(w_1), G(w_2)\subseteq H$. We are going to show  that
they have not any common endpoint. On the contrary, assume that $w_1$ and $w_2$ have at least one common endpoint. Without loss of generality, one can assume that $w:=w_1 + w_2$ is well defined and $v$ is the  vertex connecting  $w_1$ and $w_2$. Since  $v$ is cycle generic, then  there exists $e\in E$ outside of $w_1$ and $w_2$ such that $v\in r(e)$. Let us consider $C\in \cycle(G)$
such that contains $e$ and $w_1$. It is easy to see that $G(w_2)\nsubseteq C$. This means that $w_1$ and $w_2$ are not in the same cycle segment which is impossible.

Now, if two arbitrary path segments in $H$ have not any common endpoints, then  every
connected component of $H$ may contain at last one path segment. This completes the proof.
\end{proof}

\bigskip


Now, we are ready to prove our main result.

\bigskip
\noindent
{\bf Proof of Theorem \ref{MainTHM}:} For this aim, it is sufficient to prove it for a strong cyclic graph.  Let $G$ be a
strong cyclic graph. If $\cdim (G)\le 2$, then we get the proof.

For the rest of proof, we use the induction on cyclic dimension. Now, assume that the cycle double cover conjecture holds for $G$ in the case of
$\cdim (G)\le m$. We are going to  prove the cycle double cover conjecture for the case that $\cdim (G) = m + 1$ holds.
By applying the part
\ref{part:special generator} of Proposition \ref{prop:important} for an arbitrary cycle $C$ in $G$, we get  $\{C_i\}_{i=1}^{m+1}$
and $\{C_i'\}_{i=1}^{m+1}$. Then, it is easy to see that $\{C_i\}_{i=1}^{m+1}\cup\{C_{m+1}'\}$
is a cycle double cover for $G$. This completes the proof.
\qed

\bigskip

\noindent
Abdollah Alipour and Akbar Tayebi \\
Department of Mathematics, Faculty  of Science\\
University of Qom \\
Qom. Iran\\
Email:\ \ alipour.ab@gmail.com\\
Email: akbar.tayebi@gmail.com
\end{document}